\theoremstyle{definition}
\newtheorem{theorem}{Theorem}[section]
\newtheorem{proposition}[theorem]{Proposition}
\theoremstyle{definition}
\newtheorem{remark}[theorem]{Remark}
\newtheorem{claim}{Claim}
\definecolor{blue-url}{RGB}{0,0,100}
\definecolor{red-url}{RGB}{100,0,0}
\definecolor{green-url}{RGB}{0,100,0}
\definecolor{light-yellow}{RGB}{255,255,128}
\definecolor{light-blue}{RGB}{193,255,255}
\definecolor{light-red}{RGB}{239,83,80}
\renewcommand{\emptyset}{\varnothing}
\renewcommand{\ast}{\star}
\renewcommand{\,}{\kern 0.1em}
\providecommand\llb{\llbracket}
\providecommand\rrb{\rrbracket}
\newcommand{\evid}[1]{\textsf{#1}}
\newline\vspace{\abovedisplayskip}\hbox to \textwidth\bgroup\hss$\displaystyle}
\egroup\vspace{\belowdisplayskip}}
\DeclareFontFamily{OMX}{MnSymbolE}{}
\DeclareSymbolFont{MnLargeSymbols}{OMX}{MnSymbolE}{m}{n}
\DeclareFontShape{OMX}{MnSymbolE}{m}{n}{
	<-6>  MnSymbolE5
	<6-7>  MnSymbolE6
	<7-8>  MnSymbolE7
	<8-9>  MnSymbolE8
	<9-10> MnSymbolE9
	<10-12> MnSymbolE10
	<12->   MnSymbolE12
}{}
\DeclareFontShape{OMX}{MnSymbolE}{b}{n}{
	<-6>  MnSymbolE-Bold5
	<6-7>  MnSymbolE-Bold6
	<7-8>  MnSymbolE-Bold7
	<8-9>  MnSymbolE-Bold8
	<9-10> MnSymbolE-Bold9
	<10-12> MnSymbolE-Bold10
	<12->   MnSymbolE-Bold12
}{}
   \def\MR#1{}
\let\llangle\@undefined
\let\rrangle\@undefined
\DeclareMathDelimiter{\llangle}{\mathopen}%
{MnLargeSymbols}{'164}{MnLargeSymbols}{'164}
\DeclareMathDelimiter{\rrangle}{\mathclose}%
{MnLargeSymbols}{'171}{MnLargeSymbols}{'171}
\begin{document}
\title{On the density of sumsets, II}
\author{Paolo Leonetti}
\address[P.~Leonetti]{Department of Economics, Universit\`a degli Studi dell'Insubria, via Monte Generoso 71 | Varese 21100, Italy}
\email{leonetti.paolo@gmail.com}
\urladdr{https://sites.google.com/site/leonettipaolo}
\author[S.~Tringali]{Salvatore Tringali}
\address[S.~Tringali]{School of Mathematical Sciences,
Hebei Normal University | Shijiazhuang, Hebei province, 050024 China}
\email{salvo.tringali@gmail.com}
\urladdr{http://imsc.uni-graz.at/tringali}
%
\subjclass[2010]{Primary 11B05, 11B13, 28A10; Secondary 39B62, 60B99}
%
%
\keywords{Asymptotic density; Buck density; sumsets; upper and lower densities.}
\begin{abstract}
\noindent{}
Arithmetic quasi-densities are a large family of real-valued set functions partially defined on the power set of $\mathbb{N}$, including the asymptotic density, the Banach density, the analytic density, etc.

Let $B \subseteq \mathbb{N}$ be a non-empty set covering $o(n!)$ residue classes modulo $n!$ as $n\to \infty$ (e.g., the primes or the perfect powers). We show that, for each $\alpha \in [0,1]$, there is a set $A\subseteq \mathbb{N}$ such that, for every arithmetic quasi-density $\mu$, both $A$ and the sumset $A+B$ are in the domain of $\mu$ and, in addition, $\mu(A + \allowbreak B) = \alpha$. The proof relies on the properties of a little known density first considered by Buck in 1946.
\end{abstract}
\maketitle
\thispagestyle{empty}


\section{Introduction}
\label{sec:intro}

Let $\mathsf d$ be the \evid{asymptotic} (or \evid{natural}) \evid{density} on the non-negative integers $\mathbb N$ and $\mathrm{dom}(\mathsf{d})$ be the family of all sets $X \subseteq \mathbb{N}$ which possess asymptotic density, meaning that the limit of $\frac{1}{n} \bigl| X \cap [1, n] \bigr|$ as $n \to \infty$ exists. 
We are going to show that, if $B \subseteq \mathbb N$ is non-empty and ``sufficiently small'', then
there is a family of sets of the form $A+B$ with $A$ and $A+B$ both in $\mathrm{dom}(\mathsf d)$ such that the corresponding asymptotic densities attain every value in the interval $[0,1]$, where 
$$
A + B := \{x + y \colon x \in A,\, y \in B\}
$$
is the \evid{sumset} of $A$ and $B$ (see Sect.~\ref{sec:densities} for details and examples). Writing $\mathbb{P}:=\{2,3,5,\ldots\}$ for the set of primes, we obtain as a special case the following:

\begin{theorem}\label{thm:primes}
For each $\alpha \in [0,1]$, there exists $A \in \mathrm{dom}(\mathsf{d})$ such that $A+\mathbb{P} \in \mathrm{dom}(\mathsf{d})$ and $\mathsf{d}(A+\mathbb{P})=\alpha$. 
\end{theorem}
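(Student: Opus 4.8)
The plan is to deduce Theorem~\ref{thm:primes} from the properties of the measure density introduced by Buck in 1946. Write $\mathcal B \subseteq \mathcal P(\mathbb N)$ for the family of Buck‑measurable sets and $\mathfrak b$ for Buck's density; I shall use that $\mathcal B$ is an algebra containing every arithmetic progression, that $\mathfrak b$ is a finitely additive probability measure on $\mathcal B$, and — this is the bridge to the theorem — that $\mathcal B \subseteq \mathrm{dom}(\mathsf d)$ with $\mathsf d(X)=\mathfrak b(X)$ for every $X\in\mathcal B$ (all classical; it is convenient to picture $\mathfrak b$ as the normalized Haar measure on the profinite completion $\widehat{\mathbb Z}$, with $X\in\mathcal B$ precisely when the closure $\overline X\subseteq\widehat{\mathbb Z}$ has boundary of Haar measure zero). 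Hence it suffices to produce, for each $\alpha\in[0,1]$, a set $A\in\mathcal B$ with $A+\mathbb P\in\mathcal B$ and $\mathfrak b(A+\mathbb P)=\alpha$. The values $\alpha=0$ and $\alpha=1$ are handled by $A=\emptyset$ and $A=\mathbb N$, so from now on fix $\alpha\in(0,1)$.

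The one number‑theoretic input is that $\mathbb P$ is small modulo factorials: putting $R_m:=\{\,p\bmod m! : p\in\mathbb P\,\}\subseteq\mathbb Z/m!\mathbb Z$, every prime exceeding $m$ is coprime to $m!$ while there are only $\pi(m)$ primes $\le m$, so $|R_m|\le\varphi(m!)+\pi(m)$ and therefore $\varepsilon_m:=|R_m|/m!\to 0$ as $m\to\infty$, by Mertens' theorem. I would build $A$ as a disjoint union $A=\bigsqcup_{j\ge1}A_j$, where $A_j:=\{x\in\mathbb N : x\bmod n_j!\in V_j\}$ for a very rapidly increasing sequence $n_1<n_2<\cdots$ and residue sets $V_j\subseteq\mathbb Z/n_j!\mathbb Z$ still to be chosen, subject to the constraint that each nonempty $V_j$ consist of nonzero multiples of $n_{j-1}!$ (with $V_1\subseteq(\mathbb Z/n_1!\mathbb Z)\setminus\{0\}$ arbitrary); this makes $A_j\subseteq n_{j-1}!\,\mathbb N$ and the $A_j$ pairwise disjoint. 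Two elementary observations: first, $A_j$ is a finite union of arithmetic progressions; second, by Dirichlet's theorem each residue class in $R_{n_j}$ contains a prime, so for all sufficiently large $x$ one has $x\in A_j+\mathbb P$ iff $x\bmod n_j!\in V_j+R_{n_j}$ — that is, $A_j+\mathbb P$ agrees with a finite union of arithmetic progressions outside a finite set. Moreover $A_j\subseteq n_{j-1}!\,\mathbb N$ forces $A_j+\mathbb P\subseteq\mathcal R_{j-1}$, where $\mathcal R_m:=\{x\in\mathbb N : x\equiv p\ (\mathrm{mod}\ m!)\text{ for some }p\in\mathbb P\}$ satisfies $\mathfrak b(\mathcal R_m)=\varepsilon_m$ and $\mathcal R_0\supseteq\mathcal R_1\supseteq\cdots$.

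For membership in $\mathcal B$, set $\beta_J:=\mathfrak b\bigl(\bigcup_{j\le J}(A_j+\mathbb P)\bigr)$, a nondecreasing sequence in $[0,1]$ with limit $\beta_\infty$. Since $\bigcup_{j\le J}(A_j+\mathbb P)$ is, up to a finite set, a finite union of arithmetic progressions (hence in $\mathcal B$) while $\bigcup_{j>J}(A_j+\mathbb P)\subseteq\mathcal R_J$ has Buck outer measure at most $\varepsilon_{n_J}$, subadditivity of Buck outer measure gives $\beta_J\le\mathfrak b^{*}(A+\mathbb P)\le\beta_J+\varepsilon_{n_J}$, and letting $J\to\infty$ yields $\mathfrak b^{*}(A+\mathbb P)=\beta_\infty$. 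Applying the same idea to $(A+\mathbb P)^c$ (contained in $\bigl(\bigcup_{j\le J}(A_j+\mathbb P)\bigr)^c$, a set of Buck measure $1-\beta_J$, for every $J$), together with the elementary inequality $\mathfrak b^{*}(E)+\mathfrak b^{*}(E^c)\ge1$, gives $\mathfrak b^{*}((A+\mathbb P)^c)=1-\beta_\infty$; hence $A+\mathbb P\in\mathcal B$ with $\mathfrak b(A+\mathbb P)=\beta_\infty$. An entirely analogous but easier argument — using that $\bigcup_{j>J}A_j$ is confined to the multiples of $n_J!$ — shows $A\in\mathcal B$. Everything therefore reduces to choosing the parameters so that $\beta_\infty=\alpha$.

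This is done greedily. Having fixed $n_1,V_1,\dots,n_{j-1},V_{j-1}$ (hence $\beta_{j-1}$, which one arranges so that $\beta_{j-1}\in[\alpha-\varepsilon_{n_{j-1}},\alpha]$), one picks $n_j$ enormous — in particular with $\varepsilon_{n_j}$ tiny and $\sum_j\varepsilon_{n_j}<\infty$ — and then enlarges $V_j$ one residue at a time, at each step choosing a nonzero multiple $v$ of $n_{j-1}!$ for which $v+R_{n_j}$ covers as much as possible of the part of $\mathcal R_{j-1}$ not yet used by $\bigcup_{i<j}(A_i+\mathbb P)$, stopping once $\beta_j$ re‑enters $[\alpha-\varepsilon_{n_j},\alpha]$. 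Since each step moves $\beta$ by at most $|R_{n_j}|/n_j!=\varepsilon_{n_j}$, such a $\beta_j$ is attainable provided there is enough room; then $\beta_j\in[\alpha-\varepsilon_{n_j},\alpha]\to\alpha$, so $\beta_\infty=\alpha$ and we are done. The step I expect to be the main obstacle is exactly this last point: showing that at every stage there remains enough unused mass inside $\mathcal R_{j-1}$ to push $\beta_j$ back up to within $\varepsilon_{n_j}$ of $\alpha$, while simultaneously keeping each $\mathfrak b(A_j+\mathbb P)=|V_j+R_{n_j}|/n_j!$ small enough for the convergence $\sum_j\mathfrak b(A_j+\mathbb P)<\infty$ invoked above. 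This is precisely where the hypothesis that $\mathbb P$ meets only $o(n!)$ residue classes modulo $n!$ (i.e. $\varepsilon_n\to0$) is used: the vanishing of $\varepsilon_n$ shrinks the per‑step granularity, so that $\beta_J$ can converge to an arbitrary real $\alpha$ rather than to a rational, and — via an averaging argument over the choices of $v$ — it forces the previously used progressions to occupy only a negligible fraction of $\mathcal R_{j-1}$ at each new level, leaving the needed room. (Read through the general relationship between $\mathfrak b$ and the arithmetic quasi‑densities, the same construction also yields the stronger statement announced in the abstract.)
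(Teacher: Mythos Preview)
Your framework is the paper's: reduce to Buck density via $\mathrm{dom}(\mathfrak{b}) \subseteq \mathrm{dom}(\mathsf{d})$, and exploit that $\mathbb{P}$ meets only $o(n!)$ residue classes modulo $n!$. The sandwiching arguments for $A,\,A+\mathbb{P} \in \mathrm{dom}(\mathfrak{b})$ are also essentially the paper's. But the gap you yourself flag in the greedy step is genuine and is not closed by the averaging you sketch. At stage $j$ every admissible shift $v$ is a multiple of $n_{j-1}!$, so $v + R_{n_j}$ reduces modulo $n_{j-1}!$ to $R_{n_{j-1}}$; hence if the residue set $W \subseteq \mathbb{Z}/n_{j-1}!\mathbb{Z}$ of $\bigcup_{i<j}A_i$ already satisfies $R_{n_{j-1}} \subseteq W + R_{n_{j-1}}$, then $(v + R_{n_j}) \subseteq U$ for \emph{every} admissible $v$ and $\beta$ cannot move --- and the same obstruction persists at all later stages, since $A_k + \mathbb{P}$ for $k>j$ again lands in $R_{n_{j-1}}$ modulo $n_{j-1}!$. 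Your greedy rule does nothing to prevent $R_{n_{j-1}} \subseteq W + R_{n_{j-1}}$ from occurring with $|W + R_{n_{j-1}}|/n_{j-1}! < \alpha$, and the proposed averaging over $v \equiv 0 \pmod{n_{j-1}!}$ conditions precisely on the event that annihilates the new mass. (The reference to a summability condition $\sum_j \mathfrak{b}(A_j+\mathbb{P})<\infty$ is also a loose end: your measurability argument used $\bigcup_{j>J}(A_j+\mathbb{P})\subseteq\mathcal{R}_{n_J}$, not any series.)

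The paper avoids this entirely by building $A$ as a \emph{decreasing intersection} rather than an increasing union. One keeps $A_n = n!\cdot\mathbb{N} + H_n$ with $A_{n+1}\subseteq A_n$ and maintains the two-sided bracket
\[
\mathfrak{b}\bigl((A_n \setminus (n!\cdot\mathbb{N}+h_n)) + \mathbb{P}\bigr) \;\le\; \alpha \;<\; \mathfrak{b}(A_n + \mathbb{P})
\]
for a single distinguished $h_n \in H_n$. To pass from $n$ to $n+1$ one subdivides the lone progression $n!\cdot\mathbb{N}+h_n$ into its $n+1$ refinements modulo $(n+1)!$ and adds them back one at a time: adding none leaves density $\le \alpha$, adding all restores $A_n$ with density $> \alpha$, so some intermediate choice re-establishes the bracket. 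There is never a ``room'' problem to solve; the inductive hypothesis supplies it automatically. Taking $A = \bigcap_n A_n$, the conclusion $\mathfrak{b}(A + \mathbb{P}) = \alpha$ then follows by exactly the squeeze you wrote down.
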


In fact, our main result (Theorem \ref{thm:mainsumsets2}) is much more general and stronger. Not only does it allow us to show that Theorem \ref{thm:primes} holds with the primes replaced by a greater variety of sets and the asymptotic density $\mathsf{d}$ replaced by any in a large class of axiomatically defined ``densities'' $\mu$ (including, among others, the Banach density, the analytic density, and the logarithmic density), but also it does so \emph{uniformly} in the choice of $\mu$. 
The result, whose proof relies on the structural properties of a little known density first considered by Buck \cite{MR18196}, belongs to a vast literature on the interplay between the structure of sumsets and their ``largeness'', see, e.g., \cite{MR4080054, MR4197427, MR4474053, MR3939472, 
MR4439850, MR3919363, MR90618} and \cite{MR4452446, MR2956240, MR4494594, MR2522038}. 

An analogue of Theorem \ref{thm:primes} with $\mathbb{P}$ replaced by a non-empty \emph{finite} set $B \subseteq \mathbb N$ (and without the additional requirement that $A \in \mathrm{dom}(\mathsf{d})$) was proved by Faisant et al.~in \cite[Theorem 2.2]{MR4197427}. 
A previous attempt to extend the latter to an \emph{infinite} set $B$ was made by Chu in \cite[Theorem 1.5]{MR4414919}. 
However, it turned out that the proof is flawed \cite{MR4474049}. Therefore, Theorem \ref{thm:primes} provides the first example of an infinite set satisfying the required claim. (The proof is based on completely different ideas from \cite{MR4414919, MR4197427}.)



\subsection*{Notation} 
We use $\mathbb Z$ for the integers and $\mathbb N^+$ for the positive integers.
Given $X \subseteq \mathbb{N}$ and $q \in \mathbb{N}$, we define $q \cdot X := \{qx \colon x \in X\}$ and $X + q := X + \{q\}$. We let an \evid{arithmetic progression} (AP) be a set of the form $k\cdot \mathbb{N}+h$ with $k \in \mathbb N^+$ and $h \in \mathbb N$, and we denote by $\mathscr{A}$ the family of all finite unions of APs. 
Finally, for each $a, b \in \mathbb{Z}$ we write $\llb a,b \rrb := [a,b] \cap \mathbb{Z}$ for the \evid{discrete interval} between $a$ and $b$.

\section{Preliminaries and Main Result}
\label{sec:densities}

We say that a real-valued function $\mu^\ast$ defined on the power set $\mathcal P(\mathbb N)$ 
of $\mathbb N$ 
is an \evid{arithmetic upper density} (on $\mathbb N$) if, for all $X, Y \subseteq \mathbb N$, the following conditions are satisfied:

\begin{enumerate}[label={\rm (\textsc{f}\arabic{*})}]
	\item\label{it:f1} $\mu^\ast(X) \le \mu^\ast(\mathbb N) = 1$.
	\item\label{it:f2} $\mu^\ast$ is \evid{monotone}, i.e., if $X \subseteq Y$ then $\mu^\ast(X) \le \mu^\ast(Y)$.
	\item\label{it:f3} $\mu^\ast$ is \evid{subadditive}, i.e., $\mu^\ast(X \cup Y) \le \mu^\ast(X) + \mu^\ast(Y)$.
	\item\label{it:f4} $\mu^\ast(k \cdot X + h) = \frac{1}{k} \mu^\ast(X)$ for every $k \in \mathbb N^+$ and $h \in \mathbb{N}$. 
\end{enumerate}
Moreover, we call $\mu^\ast$ an  \evid{arithmetic upper quasi-density} (on $\mathbb N$) if it satisfies \ref{it:f1}, \ref{it:f3}, and \ref{it:f4}.
\begin{remark}\label{rem:non-monotone-upper-quasi-densities}
While there do exist non-monotone arithmetic upper quasi-densities \cite[Theorem 1]{MR4054777}, such functions are not so interesting from the point of view of applications. Nevertheless, it seems meaningful to understand if monotonicity is critical to certain conclusions or can instead be dispensed with. This is our motivation for considering
arithmetic upper quasi-densities in spite of our main interest lying in the study of arithmetic upper densities (of course, the latter are a special case of the former).
\end{remark}
We let the \evid{conjugate} of an arithmetic upper quasi-density $\mu^\ast$ be the function $
\mu_\ast \colon \mathcal{P}(\mathbb N) \to \mathbb R \colon X\mapsto 1-\mu^\ast(\mathbb N\setminus X)$, and we refer to the restriction $\mu$ of $\mu^\ast$ to the set 
$$
\mathcal D := 
\{X \subseteq \mathbb N \colon \mu^\ast(X) = \mu_\ast(X)\}
$$ 
as the \evid{arithmetic quasi-density} induced by $\mu^\ast$, or simply as an arithmetic quasi-density (on $\mathbb N$) if explicit reference to $\mu^\ast$ is unnecessary. Accordingly, we call $\mathcal D$ the \evid{domain} of $\mu$ and denote it by $\text{dom}(\mu)$.

Arithmetic upper [quasi-]densities and arithmetic [quasi-]densities were introduced in \cite{MR4054777} and further studied in \cite{MR3597402,MR4360486,MR4439850}, though we are adding here the adjective ``arithmetic'' to emphasize that they assign precise values to APs (see Proposition \ref{prop:basic}\ref{it:prop:basic(4)} below). 

Notable examples of arithmetic upper densities include the upper asymptotic, upper Banach, upper analytic, upper logarithmic, upper P\'olya, and upper Buck densities, see \cite[Sect.~6 and 
Examples 4, 5, 6, and 8]{MR4054777} for details. In particular, we recall that the \evid{upper Buck density} (on $\mathbb N$) is the function
\begin{equation}\label{equ:def-upper-buck}
\mathfrak{b}^\ast \colon \mathcal{P}(\mathbb N) \to \mathbb R \colon X \mapsto \inf_{A \in \mathscr{A},\, X\subseteq A}\mathsf{d}^\ast(A),
\end{equation}
where $\mathsf d^\ast$ is the \evid{upper asymptotic density} (on $\mathbb N$), i.e., the function
\begin{equation}\label{eq:asymptotic-density}
\mathcal P(\mathbb N) \to \mathbb R \colon X \mapsto \limsup_{n \to \infty} \frac{\bigl| X \cap [1, n] \bigr|}{n}.
\end{equation}
We will write $\mathfrak b_\ast$ and $\mathfrak b$, respectively, for the conjugate of and the density induced by $\mathfrak b^\ast$. 

\begin{remark}\label{rem:asymptotic-density}
The asymptotic density $\mathsf d$ in Theorem \ref{thm:primes} is nothing but the density induced by $\mathsf d^\ast$.
\end{remark}
%
We are ready to state the main theorem of the paper, whose proof we postpone to Sect.~\ref{sec:proof-of-main-thm}.
%
\begin{theorem}\label{thm:mainsumsets2}
Let $B\subseteq \mathbb{N}$ be a non-empty set 
such that $\mathfrak{b}(B)=0$. 
Then, for each $\alpha \in [0,1]$, there exists $A\subseteq \mathbb{N}$ such that $A \in \mathrm{dom}(\mu)$ and $\mu(A+B)=\alpha$ for all arithmetic quasi-densities $\mu$. 
\end{theorem}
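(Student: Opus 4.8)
The plan is to reduce the general statement to a single statement about the Buck density $\mathfrak b$, using the fact that $\mathfrak b^\ast$ dominates every arithmetic upper quasi-density on the algebra $\mathscr A$ and hence controls whether a set lies in the domain of \emph{every} arithmetic quasi-density simultaneously. More precisely, the first step is to record the following comparison principle: if $\mu^\ast$ is any arithmetic upper quasi-density, then $\mu^\ast(X) \le \mathfrak b^\ast(X)$ for all $X \subseteq \mathbb N$. Indeed, for any $A \in \mathscr A$ with $X \subseteq A$, write $A$ as a finite union of APs, apply subadditivity \ref{it:f3} together with the normalization on APs coming from \ref{it:f1} and \ref{it:f4} (here is where the ``arithmetic'' hypothesis is used: \ref{it:f4} forces $\mu^\ast(k\cdot\mathbb N + h) \le 1/k$, and one has to be slightly careful to get $\mathsf d^\ast(A)$ as the bound after merging overlapping progressions, but this is exactly Proposition \ref{prop:basic} territory), so that $\mu^\ast(X) \le \mathsf d^\ast(A)$; taking the infimum over such $A$ yields $\mu^\ast(X) \le \mathfrak b^\ast(X)$. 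Consequently $\mu_\ast(X) \ge \mathfrak b_\ast(X)$, and therefore $\mathfrak b_\ast(X) \le \mu_\ast(X) \le \mu^\ast(X) \le \mathfrak b^\ast(X)$; in particular, if $X \in \mathrm{dom}(\mathfrak b)$ then $X \in \mathrm{dom}(\mu)$ and $\mu(X) = \mathfrak b(X)$ for \emph{every} arithmetic quasi-density $\mu$.

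Given this, it suffices to construct, for each $\alpha \in [0,1]$, a set $A \subseteq \mathbb N$ with $A \in \mathrm{dom}(\mathfrak b)$ and $A + B \in \mathrm{dom}(\mathfrak b)$ and $\mathfrak b(A+B) = \alpha$. The second step is to exploit the structure of $\mathrm{dom}(\mathfrak b)$: it is known (and presumably recalled or proved in the paper's preliminaries) that $\mathfrak b$ behaves like a genuine measure — $\mathrm{dom}(\mathfrak b)$ is an algebra, $\mathfrak b$ is finitely additive on it, every $A \in \mathscr A$ lies in it, and the Buck-measurable sets are exactly those that can be approximated from outside and inside by finite unions of APs in $\mathsf d^\ast$-measure. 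So the natural move is to build $A$ as (a perturbation of) a union of arithmetic progressions, chosen so that (i) $A+B$ is ``Buck-close'' to a finite union of APs, and (ii) the density comes out to $\alpha$. The hypothesis $\mathfrak b(B) = 0$ is what makes $A + B$ tractable: morally, adding a Buck-null set to a Buck-measurable set should not destroy measurability, because $A + B$ sits between $A$ and $A + B$ differs from $A + (B\cap[1,N])$ — a finite union of shifted copies of $A$ — by something small.

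Concretely, I would proceed as follows. Fix $\alpha$ and approximate it; the easy cases $\alpha \in \{0,1\}$ (take $A = \emptyset$ or $A = \mathbb N$, using $\mathfrak b(\mathbb N \setminus B)$ vs.\ $\mathfrak b(B) = 0$) can be disposed of first. For $0 < \alpha < 1$, pick an increasing sequence of moduli $m_1 \mid m_2 \mid \cdots$ (factorials, to match the hypothesis on $B$ in the paper's running examples) and build $A$ block-by-block: on the $j$-th stage one commits $A$ to agree with a union of residue classes mod $m_j$ chosen so that the proportion of $A+B$ already determined inside $[1,m_j]$ is within $2^{-j}$ of $\alpha$, while reserving enough ``freedom'' in the higher residue classes that $B$ (being Buck-null, hence spread across $o(m_j)$ classes mod $m_j$) contributes a vanishing correction when one forms the sumset. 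The key estimate is that $(A+B) \cap [1,n]$ differs from $A' \cap [1,n]$, for a suitable $A' \in \mathscr A$ built from the same residue classes, by a set whose upper asymptotic density tends to $0$ — this is where $\mathfrak b(B) = 0$ enters decisively, via the characterization of Buck-nullity as covering $o(m)$ residue classes modulo $m$ for a cofinal family of $m$. Showing both $A$ and $A+B$ are Buck-measurable then reduces to exhibiting, for every $\varepsilon$, inner and outer approximants in $\mathscr A$; Buck-measurability of $A$ is automatic from the construction (it is built from APs up to a Buck-null tail), and Buck-measurability of $A+B$ with $\mathfrak b(A+B) = \alpha$ follows from the sumset-vs-$A'$ comparison.

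The main obstacle I anticipate is precisely the sumset control: a single residue class $k\cdot\mathbb N + h$ translated by an infinite set $B$ can blow up to something dense, so one cannot naively say $A + B \approx A$. The construction of $A$ must be adapted to $B$ — one needs that the residue classes composing $A$ (at each level $m_j$) are chosen \emph{after} seeing which classes mod $m_j$ are hit by $B$, and spaced so that $A + B$ does not overflow into uncontrolled classes. Making this precise — quantifying ``$o(m_j)$ classes for $B$'' against the block structure of $A$ and getting a clean telescoping error bound for $\mathsf d^\ast\bigl((A+B)\,\triangle\,A'\bigr)$ — is the technical heart of the argument; everything else (the reduction to $\mathfrak b$, the easy endpoint cases, the algebra/measure properties of $\mathfrak b$) is soft.
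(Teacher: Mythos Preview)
Your reduction to the Buck density is correct and is exactly the paper's first move (Proposition \ref{prop:basic}\ref{it:prop:basic(1)},\ref{it:prop:basic(3)}): once $A$ and $A+B$ lie in $\mathrm{dom}(\mathfrak b)$, the conclusion holds for every arithmetic quasi-density at once. The high-level shape of your construction --- build $A$ as a nested intersection of unions of residue classes with factorial moduli, choosing the classes so that the density of $A+B$ at each stage tracks $\alpha$ --- is also what the paper does.

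Where you part ways with the paper is in the ``sumset control'' you flag as the main obstacle, and here you are making things harder than they are. You worry that a full residue class translated by an infinite $B$ may ``blow up'', and propose to choose the classes of $A$ after inspecting which classes $B$ hits so as to prevent ``overflow''. The paper sidesteps this entirely with a one-line observation (Proposition \ref{prop:basic}\ref{it:prop:basic(6)}): if $X \in \mathscr A$ then $X+Y \in \mathscr A$ for \emph{every} $Y \subseteq \mathbb N$, because a full AP of modulus $k$ shifted by an arbitrary set is again a union of at most $k$ APs of modulus $k$. Hence, writing $A_n := n!\cdot\mathbb N + H_n$, the set $A_n + B$ is always in $\mathscr A$ and $\mathfrak b(A_n+B)$ is an exact number one can steer directly --- there is nothing to approximate and no overflow to avoid. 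The paper then runs a straightforward bisection: keep a single ``live'' class $h_n \in H_n$ such that $\mathfrak b^\ast\bigl((A_n\setminus(n!\cdot\mathbb N+h_n))+B\bigr) \le \alpha < \mathfrak b^\ast(A_n+B)$, split $n!\cdot\mathbb N+h_n$ into its $n{+}1$ subclasses mod $(n{+}1)!$, and add them back one at a time until the threshold is crossed. The hypothesis $\mathfrak b(B)=0$ enters not to bound a symmetric difference $(A+B)\,\triangle\,A'$ as you suggest, but to make the bisection increments small: toggling the single class $n!\cdot\mathbb N+h_n$ changes $\mathfrak b(A_n+B)$ by at most $\mathfrak b(n!\cdot\mathbb N + h_n + B)$, which is at most the number of residues of $B$ modulo $n!$ divided by $n!$, i.e., $o(1)$. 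Your proposed route --- spacing classes to avoid overflow and then comparing $A+B$ to an auxiliary $A'\in\mathscr A$ --- might be completable, but it trades a two-line algebraic fact for a combinatorial control whose details you have not supplied.
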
 

The sets $B\subseteq \mathbb{N}$ such that $\mathfrak{b}(B)=0$ have been studied in \cite{MR4360486}, where they are called ``\emph{small sets}''. 
Since $\mathfrak{b}$ is monotone and subadditive, it is clear that the family of small sets is closed under finite unions and subsets. 
Examples of small sets include 
the finite sets, 
the factorials, 
the perfect powers, 
and the primes. 
One may be tempted to conjecture that a set $B\subseteq \mathbb{N}$ is small if it is ``sufficiently sparse''. However, the property of being small depends on the distribution of $B$ through the APs of $\mathbb{N}$ (see Proposition  \ref{prop:basic}\ref{it:prop:basic(5)}). 
E.g., the set $\{n!+n \colon n \in \mathbb{N}\}$ is \emph{not} small (its upper Buck density is $1$), but is sparse by any standard.

To date, it is not known whether non-monotone arithmetic quasi-densities do exist (cf.~Remark \ref{rem:non-monotone-upper-quasi-densities}). 
However, arithmetic quasi-densities satisfy a weak form of monotonicity (implicit in the proof of Proposition \ref{prop:basic}) that will be enough for our goals.




\section{Proofs}
\label{sec:proof-of-main-thm}
%

To start with, we collect some basic properties of [upper and lower] quasi-densities that will be used, possibly without further comment, in the proof of Theorem \ref{thm:mainsumsets2}.

\begin{proposition}\label{prop:basic}
Let $\mu_\ast$ be the conjugate of an arithmetic upper quasi-density $\mu^\ast$ on $\mathbb N$, and $\mu$ be the density induced by $\mu^\ast$. Then the following hold:
	\begin{enumerate}[label={\rm (\roman{*})}]
		\item \label{it:prop:basic(1)} $\mathfrak b_\ast(X) \le \mu_\ast(X)\le \mu^\ast(X) \le \mathfrak b^\ast(X)$ for every $X\subseteq \mathbb N$.
		\item \label{it:prop:basic(2)} If $X \subseteq Y \subseteq \mathbb N$, then $\mathfrak b_\ast(X) \le \mathfrak b_\ast(Y)$.
		\item \label{it:prop:basic(3)} $\mathscr{A}\subseteq \mathrm{dom}(\mathfrak{b}) \subseteq \mathrm{dom}(\mu)$ and $\mu(X)=\mathfrak{b}(X)$ for every $X \in \mathrm{dom}(\mathfrak b)$.
		\item \label{it:prop:basic(4)} If $k \in \mathbb N^+$ and $H \subseteq \llb 0, k-1 \rrb$, then $k \cdot \mathbb N + H \in \mathrm{dom}(\mathfrak b)$ and $\mathfrak b(k \cdot \mathbb N + H) = \frac{|H|}{k}$.
		%
		%
        %
        \item \label{it:prop:basic(5)} $\mathfrak{b}(X)=0$ if and only if $X$ covers $o(n!)$ residue classes modulo $n!$ as $n\to \infty$.
        \item \label{it:prop:basic(6)} If $X \in \mathscr A$, then $X + Y \in \mathscr A$ for every $Y \subseteq \mathbb N$.
	\end{enumerate}
\end{proposition}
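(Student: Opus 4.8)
The plan is to derive all six items from a single structural description of $\mathscr A$ together with the axioms \ref{it:f1}, \ref{it:f3}, \ref{it:f4} and the definition of $\mathfrak b^\ast$. First I would record the key structural fact: for every $A\in\mathscr A$ there exist $k\in\mathbb N^+$ and $H\subseteq\llb 0,k-1\rrb$ with $A\mathbin{\triangle}(k\cdot\mathbb N+H)$ finite — take $k$ to be the least common multiple of the moduli of the APs making up $A$, so that each such AP agrees, outside a finite set, with a union of residue classes modulo $k$. Since $\mathsf d^\ast(k\cdot\mathbb N+H)=|H|/k$ and $\mathfrak b^\ast$ inherits monotonicity and subadditivity from $\mathsf d^\ast$, vanishes on finite sets (because $\mathfrak b^\ast(F)\le|F|/k\to0$ as $k\to\infty$), and is therefore unchanged under finite symmetric differences, the behaviour of $\mathfrak b^\ast$ on all of $\mathscr A$ is pinned down. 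Part \ref{it:prop:basic(6)} then follows directly: from $X=\bigcup_{i=1}^r(k_i\cdot\mathbb N+h_i)$ we get $X+Y=\bigcup_{i=1}^r\bigl(k_i\cdot\mathbb N+(h_i+Y)\bigr)$, and each summand is a finite union of APs — group the elements of $h_i+Y$ by their residue modulo $k_i$ and keep the least element in each nonempty class.

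For part \ref{it:prop:basic(1)}, the point is to bypass the possibly missing monotonicity of $\mu^\ast$. I would first observe that \ref{it:f4} with $k=1$ makes $\mu^\ast$ translation invariant, hence constant on singletons, and that \ref{it:f4} with $X=\{0\}$ and $k=2$ (so that $2\cdot\{0\}=\{0\}$) forces this common value to be $0$; by \ref{it:f3}, $\mu^\ast$ then vanishes on every finite subset of $\mathbb N$. Now fix $X\subseteq\mathbb N$ and $A\in\mathscr A$ with $X\subseteq A$; by the structural fact $X\subseteq(k\cdot\mathbb N+H)\cup F$ with $F$ finite, so \ref{it:f3} gives
\[
\mu^\ast(X)\;\le\;\sum_{h\in H}\mu^\ast\!\bigl(X\cap(k\cdot\mathbb N+h)\bigr)+\mu^\ast(X\cap F).
\]
For each $h$ one has $X\cap(k\cdot\mathbb N+h)=k\cdot Z_h+h$ for a suitable $Z_h\subseteq\mathbb N$, so $\mu^\ast\bigl(X\cap(k\cdot\mathbb N+h)\bigr)=\tfrac1k\mu^\ast(Z_h)\le\tfrac1k$ by \ref{it:f4} and \ref{it:f1}, while $\mu^\ast(X\cap F)=0$; summing and then taking the infimum over $A$ yields $\mu^\ast(X)\le|H|/k=\mathsf d^\ast(A)$ and finally $\mu^\ast(X)\le\mathfrak b^\ast(X)$. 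Applying \ref{it:f3} to $\mathbb N=X\cup(\mathbb N\setminus X)$ gives $\mu_\ast(X)=1-\mu^\ast(\mathbb N\setminus X)\le\mu^\ast(X)$, and feeding $\mathbb N\setminus X$ into the inequality just proved and passing to conjugates gives $\mathfrak b_\ast(X)\le\mu_\ast(X)$.

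The remaining items are bookkeeping built on \ref{it:prop:basic(1)}. Part \ref{it:prop:basic(2)} is the monotonicity of $\mathfrak b^\ast$ read through the definition of the conjugate. For \ref{it:prop:basic(3)}: given $A\in\mathscr A$, the structural fact gives $\mathfrak b^\ast(A)\le\mathsf d^\ast(A)=|H|/k$ and $\mathfrak b^\ast(\mathbb N\setminus A)=\mathfrak b^\ast\bigl(k\cdot\mathbb N+(\llb 0,k-1\rrb\setminus H)\bigr)\le1-|H|/k$, the latter forcing $\mathfrak b_\ast(A)\ge|H|/k$; since $\mathfrak b_\ast(A)\le\mathfrak b^\ast(A)$ (apply subadditivity of $\mathfrak b^\ast$ to $\mathbb N=A\cup(\mathbb N\setminus A)$), all three coincide, so $A\in\mathrm{dom}(\mathfrak b)$ with $\mathfrak b(A)=|H|/k$; and if $X\in\mathrm{dom}(\mathfrak b)$, then \ref{it:prop:basic(1)} squeezes $\mathfrak b_\ast(X)\le\mu_\ast(X)\le\mu^\ast(X)\le\mathfrak b^\ast(X)=\mathfrak b_\ast(X)$, whence $X\in\mathrm{dom}(\mu)$ and $\mu(X)=\mathfrak b(X)$. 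Part \ref{it:prop:basic(4)} is the case $A=k\cdot\mathbb N+H$ of \ref{it:prop:basic(3)}. For \ref{it:prop:basic(5)}: since $0\le\mathfrak b_\ast\le\mathfrak b^\ast$, we have $\mathfrak b(X)=0$ iff $\mathfrak b^\ast(X)=0$. Let $r_X(m)$ denote the number of residue classes modulo $m$ met by $X$. The union of those classes lies in $\mathscr A$ and has density $r_X(m)/m$, so $\mathfrak b^\ast(X)\le r_X(m)/m$ for every $m$; conversely, for $A\in\mathscr A$ with $X\subseteq A$ the structural fact gives $X\subseteq(k\cdot\mathbb N+H)\cup F$ with $F$ finite, so $r_X(kd)/(kd)\le|H|/k+|F|/(kd)$ for every $d\ge1$, hence $\inf_{m\ge1}r_X(m)/m\le|H|/k=\mathsf d^\ast(A)$, and taking the infimum over $A$ gives $\inf_{m\ge1}r_X(m)/m\le\mathfrak b^\ast(X)$; thus $\mathfrak b^\ast(X)=\inf_{m\ge1}r_X(m)/m$. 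Finally, $m\mid m'$ forces $r_X(m')/m'\le r_X(m)/m$ (each class mod $m'$ meeting $X$ lies above a class mod $m$ meeting $X$, and there are $m'/m$ classes mod $m'$ above each class mod $m$), so $n\mapsto r_X(n!)/n!$ is non-increasing and, $(n!)_{n\ge1}$ being cofinal for divisibility, its limit equals $\inf_{m\ge1}r_X(m)/m=\mathfrak b^\ast(X)$; therefore $\mathfrak b(X)=0$ if and only if $r_X(n!)=o(n!)$, which is exactly the stated condition.

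The step I expect to be the genuine obstacle is part \ref{it:prop:basic(1)}: lacking \ref{it:f2}, one cannot write $\mu^\ast(X)\le\mu^\ast(A)$, and the replacement — compressing each trace $X\cap(k\cdot\mathbb N+h)$ to a subset of $\mathbb N$ via \ref{it:f4} and controlling it by $1/k$ through \ref{it:f1}, after clearing the finite tail using the vanishing of $\mu^\ast$ on finite sets — is precisely the ``weak monotonicity'' alluded to before the statement, and it is what makes the whole chain $\mathfrak b_\ast\le\mu_\ast\le\mu^\ast\le\mathfrak b^\ast$ go through. A lesser but real nuisance is that $\mathscr A$ is \emph{not} closed under complementation, only up to a finite set, so in parts \ref{it:prop:basic(3)} and \ref{it:prop:basic(5)} one must argue via the finite-set insensitivity of $\mathfrak b^\ast$ and replace an arbitrary modulus $m$ by the cofinal sequence $(n!)_{n\ge1}$.
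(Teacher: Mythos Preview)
Your proof is correct. The paper itself argues only item \ref{it:prop:basic(6)} directly---by the same decomposition into residue classes that you use---and defers items \ref{it:prop:basic(1)}--\ref{it:prop:basic(5)} to earlier papers in the series, so your self-contained treatment (in particular the ``weak monotonicity'' trick for \ref{it:prop:basic(1)}, splitting $X$ along residue classes and applying \ref{it:f4} and \ref{it:f1} to each piece) supplies what the present paper only cites. Your care in working with $A\mathbin{\triangle}(k\cdot\mathbb N+H)$ finite rather than asserting equality is in fact more accurate than the paper's phrasing in its proof of \ref{it:prop:basic(6)}, since APs of the form $k\cdot\mathbb N+h$ with $h\ge k$ need not fill out a full residue class.
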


\begin{proof}
See \cite[Proposition 2.1]{MR4439850} for items \ref{it:prop:basic(1)}--\ref{it:prop:basic(4)} and \cite[Proposition 2.6]{MR4360486} for item \ref{it:prop:basic(5)}. As for \ref{it:prop:basic(6)}, let $X \in \mathscr{A}$ and $Y \subseteq \mathbb N$. There then exist $k \in \mathbb{N}^+$ and $H\subseteq \llb 0,k-1\rrb$ such that $X=k\cdot \mathbb{N}+H$, and hence
$$
X+Y=\bigcup_{y \in Y}(X+y)=\bigcup_{h \in H+Y}(k\cdot \mathbb{N}+h)=k\cdot \mathbb{N}+H',
$$
where $H'$ is the finite set $\bigcup_{i}\{\min((H+Y)\cap (k\cdot \mathbb{N}+i)\}$ and the union is extended over all $i \in \llb 0,k-1\rrb$ such that $(H+Y)\cap (k\cdot \mathbb{N}+i) \ne \emptyset$ (with the understanding that an empty union is the empty set). 
\end{proof}

We are ready for the proof of our main result. Note that the special case of a non-empty finite $B \subseteq \mathbb N$ was settled in \cite[Theorem 1.2]{MR4439850} by a different argument.

\begin{proof}
[Proof of Theorem \ref{thm:mainsumsets2}]
If $\alpha = 1$, then the conclusion is obvious (by taking $A = \mathbb N$). So, we assume from now on that $0 \le \alpha < 1$. We divide the remainder of the proof into a series of three claims. 

\vskip 0.15cm

\begin{claim}\label{claimA}
There exist a sequence $(H_n)_{n \ge 1}$ of (non-empty) subsets of $\mathbb N$ with $H_n \subseteq \llb 0, n! - 1 \rrb$ and a sequence $(h_n)_{n \ge 1}$ with $h_n \in H_n$ such that, for all $n \ge 1$, the following hold (we set $H_n' := H_n \setminus \{h_n\}$ for ease of notation):
\begin{enumerate}[label=\textup{(\roman{*})}]
\item\label{claimA(i)} $\mathfrak b^\ast(n! \cdot \mathbb N + H_n' + B) \le \alpha < \mathfrak b^\ast(n! \cdot \mathbb N + H_n +  B)$.

\item\label{claimA(ii)} $n! \cdot \mathbb N + H_n' \subseteq H_{n+1} + (n+1)! \cdot \mathbb N \subseteq n! \cdot \mathbb N + H_n$.
\end{enumerate}
\end{claim}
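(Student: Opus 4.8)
The plan is to construct the sequences $(H_n)_{n\ge 1}$ and $(h_n)_{n\ge 1}$ recursively, maintaining the two invariants (i) and (ii) at each stage. The key quantity to track is $\mathfrak b^\ast(n!\cdot\mathbb N + H + B)$ for candidate subsets $H\subseteq\llb 0, n!-1\rrb$. By Proposition \ref{prop:basic}\ref{it:prop:basic(6)} the set $n!\cdot\mathbb N + H + B$ is a finite union of APs, so it lies in $\mathrm{dom}(\mathfrak b)$ and $\mathfrak b^\ast$ agrees there with $\mathfrak b$; moreover adding one more residue $h$ to $H$ can only enlarge the set, so $H\mapsto\mathfrak b^\ast(n!\cdot\mathbb N + H + B)$ is monotone in $H$. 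The crucial point is that this value jumps continuously enough, in increments that we can control, because $\mathfrak b(B)=0$: intuitively, a single residue class modulo $n!$ contributes roughly $1/n!$ to the Buck density, but once we add $B$ the overlap is negligible since $B$ meets only $o(m!)$ classes mod $m!$.

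First I would handle the base case $n=1$: since $0\le\alpha<1$, I can pick $H_1\subseteq\llb 0, 0\rrb$, i.e. essentially start the recursion at a stage where $n!$ is large enough that $1/n!$-sized increments straddle $\alpha$ (one may as well take $H_1 = \{0\}$ and note the first genuine constraint kicks in later, or simply begin the indexing at a larger $n$ and relabel). For the inductive step, suppose $H_n, h_n$ are chosen satisfying (i). Working modulo $(n+1)!$, each residue class mod $n!$ splits into $n+1$ classes mod $(n+1)!$; write $\mathcal C$ for the collection of residues $r\in\llb 0,(n+1)!-1\rrb$ lying in some class of $H_n'$ together with the $n+1$ subclasses of $h_n$. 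Then $n!\cdot\mathbb N + H_n' \subseteq (n+1)!\cdot\mathbb N + \mathcal C \subseteq n!\cdot\mathbb N + H_n$, so (ii) will hold for any $H_{n+1}$ sandwiched between the $\mathcal C$-subclasses of $H_n'$ and of $H_n$. Now enumerate the residues of $\mathcal C\setminus\{\text{subclasses of }H_n'\}$ (these are the $n+1$ subclasses of $h_n$) and add them one at a time to the set $(n+1)!\cdot\mathbb N + (\text{subclasses of }H_n') + B$. At the start the Buck density is $\le\alpha$ by the left inequality of (i) at level $n$ (the two sets $n!\cdot\mathbb N+H_n'+B$ and $(n+1)!\cdot\mathbb N + (\text{subclasses of }H_n')+B$ coincide); at the end it equals $\mathfrak b^\ast(n!\cdot\mathbb N + H_n + B) > \alpha$ by the right inequality of (i) at level $n$. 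Hence there is a first residue whose inclusion pushes the value strictly above $\alpha$: declare that residue to be $h_{n+1}$, let $H_{n+1}$ consist of all residues added strictly before it, so that removing $h_{n+1}$ gives $H_{n+1}' $ with $\mathfrak b^\ast((n+1)!\cdot\mathbb N + H_{n+1}' + B)\le\alpha$, while $\mathfrak b^\ast((n+1)!\cdot\mathbb N + H_{n+1}+B)>\alpha$ — this is exactly (i) at level $n+1$. Non-emptiness of $H_{n+1}$ follows since $h_{n+1}\in H_{n+1}$ by construction (the value at the empty stage is already $\le\alpha<$ the value after adding the first residue, so the first $h$ could itself be $h_{n+1}$, giving $H_{n+1}=\{h_{n+1}\}$, still non-empty).

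The main obstacle I anticipate is verifying that the jump at the critical residue does not overshoot in a way that breaks the \emph{left} inequality at the next level — that is, one must be sure the set of residues added strictly before $h_{n+1}$ genuinely yields Buck density $\le\alpha$ and is not the empty set masquerading as something that already exceeds $\alpha$. This is where $\mathfrak b(B)=0$ is essential and must be used carefully: I would show that for any finite union of APs $S$ with modulus dividing $(n+1)!$, one has $\mathfrak b^\ast(S + B) = \mathsf d^\ast(S)$ exactly (not merely approximately), because $B$ being small means $S+B$ and $S$ have the same "closure" in the relevant profinite sense — concretely, $S+B$ still contains $S$ and is contained in a union of APs of density arbitrarily close to $\mathsf d^\ast(S)$. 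Granting that identity, the increments are precisely multiples of $1/(n+1)!$ as we add residues, the value climbs in steps of size at most $1/(n+1)!$, and the straddling argument goes through cleanly. Once the three claims are in place, the set $A := \bigcap_{n\ge 1}(n!\cdot\mathbb N + H_n)$ — or rather its complement structure dictated by the nested intervals in (ii) — will be the desired set, with $\mu(A+B)=\alpha$ forced by squeezing $\mathfrak b_\ast$ and $\mathfrak b^\ast$ together via Proposition \ref{prop:basic}\ref{it:prop:basic(1)}.
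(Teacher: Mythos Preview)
Your inductive strategy --- refine the distinguished class $h_n$ into its $n+1$ subclasses modulo $(n+1)!$, add them one at a time on top of the refinement of $H_n'$, and let $h_{n+1}$ be the first one whose inclusion pushes $\mathfrak b^\ast$ past $\alpha$ --- is exactly the paper's argument, and it works. Two small slips to fix: $H_{n+1}$ must consist of all the subclasses of $H_n'$ \emph{together with} the subclasses of $h_n$ up to and \emph{including} $h_{n+1}$ (you wrote ``strictly before it'', which would leave $h_{n+1}\notin H_{n+1}$, contradicting what you assert two lines later); and the base case $H_1=\{0\}$, $h_1=0$ already satisfies (i) outright, since $\mathfrak b^\ast(\emptyset+B)=0\le\alpha$ while $\mathbb N+B$ is cofinite and hence $\mathfrak b^\ast(\mathbb N+B)=1>\alpha$, so no relabelling or delayed start is needed.

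The ``main obstacle'' you anticipate is not one, and the identity you propose to resolve it is false. The first-crossing argument already secures the left inequality at level $n+1$: by the very definition of $h_{n+1}$ as the \emph{first} subclass whose inclusion makes the density exceed $\alpha$, the configuration just before its inclusion has density $\le\alpha$ --- there is nothing further to verify. In particular, the hypothesis $\mathfrak b(B)=0$ is not used anywhere in Claim~\ref{claimA}; it enters only in Claim~\ref{claimC}, where one needs $\mathfrak b(n!\cdot\mathbb N + h_n + B)\to 0$ to squeeze $\mathfrak b_\ast(A+B)$ and $\mathfrak b^\ast(A+B)$ together. As for the proposed identity $\mathfrak b^\ast(S+B)=\mathsf d^\ast(S)$: take $S=2\cdot\mathbb N$ and $B=\{0,1\}$ (finite, hence $\mathfrak b(B)=0$); then $S+B=\mathbb N$ has density $1\ne 1/2$. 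The increment from adding a single residue class modulo $(n+1)!$ is therefore \emph{not} bounded by $1/(n+1)!$ in general --- but Claim~\ref{claimA} never needs it to be.
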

\begin{proof}[Proof of Claim \ref{claimA}]
\ref{claimA(i)} 
We proceed by induction. 
The base case is clear by taking $H_1:=\{0\}$ and $h_1:=0$. 

As for the inductive step, fix $m \ge 1$ and suppose we have already found a set $H_m \subseteq \llb 0, m! - 1 \rrb$ 
and an integer $h_m \in H_m$ 
such that the claimed inequality holds for $n = m$.  
Since 
\begin{equation}\label{equ:claimA(i)(1)}
H_m + m! \cdot \mathbb N = (m! \cdot \mathbb N + H_m') \cup ((m+1)! \cdot \mathbb N + h_m + \llb 0, m \rrb \cdot m!) 
\end{equation}
and $\mathfrak b^\ast(n! \cdot \mathbb N + H_m' + B) \le \alpha < \mathfrak b^\ast(n! \cdot \mathbb N + H_m + B)$,
there exists a least $k_{m+1} \in \llb 0, m \rrb$ such that 
\begin{equation}\label{equ:claimA(i)(2)}
\mathfrak b^\ast((H_m' + m! \cdot \mathbb N) \cup (h_m + \llb 0, k_{m+1} \rrb \cdot m! + (m+1)! \cdot \mathbb N)) > \alpha
\end{equation} 
Consequently, we define
\begin{equation}\label{equ:claimA(i)(3)}
H_{m+1}:=(H_m'+\llb 0,m\rrb \cdot m!) \cup (h_m + \llb 0, k_{m+1} \rrb \cdot m!)
\quad\text{and}\quad 
h_{m+1} := h_m + k_{m+1} \cdot m!.
\end{equation}
It is thus clear from Eq.~\eqref{equ:claimA(i)(2)} and the minimality of $k_{m+1}$ that the claimed inequality is also true for $n = \allowbreak m+1$. By induction, this is enough to complete the proof.

\vskip 0.15cm

\ref{claimA(ii)} 
This is now a straightforward consequence of the recursive construction of the sequences $(H_n)_{n \ge 1}$ and $(h_n)_{n \ge 1}$ as given in the inductive step of the proof of item \ref{claimA(i)}. 
\end{proof}

\begin{claim}\label{claimB}
Set $A := \bigcap_{n \ge 1} A_n$, where $A_n := n! \cdot \mathbb N + H_n$. Then $A \in \mathrm{dom}(\mathfrak b)$. 
\end{claim}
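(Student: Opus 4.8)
The goal is to show $A=\bigcap_{n\ge 1}A_n$ lies in $\mathrm{dom}(\mathfrak b)$, i.e., that $\mathfrak b^\ast(A)=\mathfrak b_\ast(A)$. The key observation is that by Claim \ref{claimA}\ref{claimA(ii)} the sets $A_n=n!\cdot\mathbb N+H_n$ form a \emph{decreasing} chain (since $A_{n+1}=(n+1)!\cdot\mathbb N+H_{n+1}\subseteq n!\cdot\mathbb N+H_n=A_n$), and each $A_n\in\mathscr A$, so by Proposition \ref{prop:basic}\ref{it:prop:basic(4)} we know $A_n\in\mathrm{dom}(\mathfrak b)$ with $\mathfrak b(A_n)=|H_n|/n!$. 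Since $A\subseteq A_n$ for every $n$, monotonicity of $\mathfrak b^\ast$ gives $\mathfrak b^\ast(A)\le\mathfrak b(A_n)=|H_n|/n!$ for all $n$; hence $\mathfrak b^\ast(A)\le\inf_n|H_n|/n!$. So it suffices to prove the matching lower bound $\mathfrak b_\ast(A)\ge\inf_n|H_n|/n!$, equivalently (unwinding the definition of the conjugate) $\mathfrak b^\ast(\mathbb N\setminus A)\le 1-\inf_n|H_n|/n!$.

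First I would set $\ell:=\lim_n |H_n|/n!$ and check this limit exists: from \ref{claimA(ii)}, $n!\cdot\mathbb N+H_n'\subseteq A_{n+1}\subseteq n!\cdot\mathbb N+H_n$, and taking $\mathfrak b$ of these AP's gives $(|H_n|-1)/n!\le |H_{n+1}|/(n+1)!\le |H_n|/n!$, so the sequence $(|H_n|/n!)_{n\ge1}$ is non-increasing and bounded below by $0$, hence convergent; moreover the gap between consecutive terms is at most $1/n!\to 0$. Now the heart of the argument is to bound $\mathfrak b^\ast(\mathbb N\setminus A)$ from above. Since $\mathbb N\setminus A=\bigcup_{n\ge1}(\mathbb N\setminus A_n)$ and the sets $\mathbb N\setminus A_n$ are \emph{increasing}, I would fix $\varepsilon>0$, pick $N$ with $|H_N|/N!<\ell+\varepsilon$, and write $\mathbb N\setminus A=(\mathbb N\setminus A_N)\cup\bigl(A_N\setminus A\bigr)$, where $A_N\setminus A=\bigcup_{n\ge N}(A_N\setminus A_n)$. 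The first piece has $\mathfrak b^\ast(\mathbb N\setminus A_N)=1-|H_N|/N!<1-\ell+\varepsilon$. For the second piece I would show $\mathfrak b^\ast(A_N\setminus A)$ is small: since $A_N\setminus A_n$ is a finite union of AP's with modulus $n!$ and "width" exactly $|H_N|\cdot(n!/N!)-|H_n|$ residues mod $n!$, its density is $(|H_N|/N!)-(|H_n|/n!)\to (|H_N|/N!)-\ell<\varepsilon$; so each $A_N\setminus A_n$ has upper Buck density $<\varepsilon$, and one must pass to the limit over the increasing union $\bigcup_{n\ge N}(A_N\setminus A_n)$.

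**The main obstacle.** The delicate point is exactly that last step: upper Buck density (like upper asymptotic density) is \emph{not} continuous from below along increasing unions in general, so I cannot simply say $\mathfrak b^\ast(A_N\setminus A)=\lim_n\mathfrak b^\ast(A_N\setminus A_n)$. The right way around this is to use the $\mathscr A$-infimum definition of $\mathfrak b^\ast$ directly: for each $\varepsilon$ I exhibit a \emph{single} set $C\in\mathscr A$ with $\mathbb N\setminus A\subseteq C$ and $\mathsf d^\ast(C)\le 1-\ell+2\varepsilon$. Concretely, take $C:=(\mathbb N\setminus A_N)\cup D$ where $D\in\mathscr A$ is chosen to cover $A_N\setminus A$; because $A_N\setminus A\subseteq A_N\setminus A_n$ for the chosen large $n$ and $A_N\setminus A_n\in\mathscr A$ already, one can take $D:=A_N\setminus A_n$ itself, giving $\mathsf d^\ast(C)\le (1-|H_N|/N!)+((|H_N|/N!)-(|H_n|/n!))=1-|H_n|/n!\le 1-\ell+\varepsilon$. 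Letting $\varepsilon\to0$ yields $\mathfrak b^\ast(\mathbb N\setminus A)\le 1-\ell$, hence $\mathfrak b_\ast(A)\ge\ell\ge\mathfrak b^\ast(A)$, and therefore $A\in\mathrm{dom}(\mathfrak b)$ (with $\mathfrak b(A)=\ell=\inf_n|H_n|/n!$). The only care needed is bookkeeping that $A_N\setminus A_n$ really is in $\mathscr A$ and has the stated asymptotic density, both of which follow from Proposition \ref{prop:basic}\ref{it:prop:basic(3)}--\ref{it:prop:basic(4)} since it is a finite union of AP's of common modulus $n!$.
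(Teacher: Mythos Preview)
Your plan is sound up to the ``main obstacle'' paragraph, but the resolution you propose there contains a reversed inclusion that makes the argument collapse. You write that $A_N\setminus A\subseteq A_N\setminus A_n$ for large $n$, and therefore $D:=A_N\setminus A_n$ covers $A_N\setminus A$. This is backwards: since $A=\bigcap_m A_m\subseteq A_n$, taking complements inside $A_N$ gives $A_N\setminus A_n\subseteq A_N\setminus A$, not the other way around. Indeed you had already (correctly) written $A_N\setminus A=\bigcup_{n\ge N}(A_N\setminus A_n)$ as an \emph{increasing} union, which is precisely the statement that each $A_N\setminus A_n$ is \emph{contained in} $A_N\setminus A$. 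So your proposed $C=(\mathbb N\setminus A_N)\cup(A_N\setminus A_n)=\mathbb N\setminus A_n$ does not contain $\mathbb N\setminus A$, and you have not produced an $\mathscr A$-cover of the complement.

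The missing idea is a lower containment for $A$ itself, not an upper containment for $\mathbb N\setminus A$ built from the $A_n$'s alone. The paper observes that Claim~\ref{claimA}\ref{claimA(ii)} (iterated) yields
\[
n!\cdot\mathbb N+H_n'\ \subseteq\ A\ \subseteq\ A_n
\qquad\text{for every }n\ge 1,
\]
because the sets $n!\cdot\mathbb N+H_n'$ form an \emph{increasing} chain (from the recursion $H_{m+1}\supseteq H_m'+\llb 0,m\rrb\cdot m!$), each contained in all later $A_m$'s and trivially in all earlier ones. Since both ends of this sandwich lie in $\mathscr A$, monotonicity of $\mathfrak b_\ast$ and $\mathfrak b^\ast$ gives
\[
\mathfrak b(A_n)-\tfrac{1}{n!}\ \le\ \mathfrak b_\ast(A)\ \le\ \mathfrak b^\ast(A)\ \le\ \mathfrak b(A_n),
\]
and letting $n\to\infty$ finishes the proof in one line. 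In your framework, the set $\mathbb N\setminus(n!\cdot\mathbb N+H_n')$ is exactly the $\mathscr A$-cover of $\mathbb N\setminus A$ you were looking for; your error was trying to build such a cover out of $\mathbb N\setminus A_n$, which is too small.
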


\begin{proof}
Pick $n \in \mathbb N^+$. It follows from Claim \ref{claimA}\ref{claimA(ii)} that 
$A_n\setminus (n!\cdot \mathbb{N}+h_n) \subseteq A \subseteq A_n$. Considering that $A_n$ and $A_n\setminus (n!\cdot \mathbb{N}+h_n)$ are both in $\mathscr{A}$ and $\mathscr A$ is contained in $\mathrm{dom}(\mathfrak b)$, we obtain 
\begin{equation}\label{eq:claimB(1)}
\mathfrak{b}(A_n)-\frac{1}{n!}\le \mathfrak{b}_\ast(A)\le \mathfrak{b}^\ast(A) \le \mathfrak{b}(A_n).
\end{equation}
On the other hand, Claim \ref{claimA}\ref{claimA(ii)} gives $A_{n+1} \subseteq A_n$. Since $\mathfrak b$ is monotone, it follows that $\mathfrak b(A_n)$ tends to a limit as $n \to \infty$. So, we get from Eq.~\eqref{eq:claimB(1)} that $\mathfrak{b}_\ast(A) = \mathfrak{b}^\ast(A) = \lim_n \mathfrak b(A_n)$ and hence $A \in \mathrm{dom}(\mathfrak b)$.
\end{proof}

\begin{claim}\label{claimC}
$A+B \in \mathrm{dom}(\mathfrak b)$ and $\mathfrak b(A+B) = \alpha$.
\end{claim}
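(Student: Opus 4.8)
The plan is to sandwich $A+B$ between the two sets $A_n'+B$ and $A_n+B$, where $A_n' := n!\cdot\mathbb N + H_n' = A_n\setminus(n!\cdot\mathbb N+h_n)$, and to show that $\mathfrak b(A_n'+B)$ and $\mathfrak b(A_n+B)$ both converge to $\alpha$. First I would record that $A_n' \subseteq A \subseteq A_n$ (this was established in the proof of Claim~\ref{claimB}), whence $A_n'+B \subseteq A+B \subseteq A_n+B$; since $A_n, A_n' \in \mathscr{A}$, Proposition~\ref{prop:basic}\ref{it:prop:basic(6)} gives $A_n+B, A_n'+B \in \mathscr{A} \subseteq \mathrm{dom}(\mathfrak b)$, and then the monotonicity of $\mathfrak b_\ast$ (Proposition~\ref{prop:basic}\ref{it:prop:basic(2)}) and of $\mathfrak b^\ast$ yields, for every $n \ge 1$,
\[
\mathfrak b(A_n'+B) \le \mathfrak b_\ast(A+B) \le \mathfrak b^\ast(A+B) \le \mathfrak b(A_n+B).
\]
By Claim~\ref{claimA}\ref{claimA(i)} the outer terms satisfy $\mathfrak b(A_n'+B) \le \alpha < \mathfrak b(A_n+B)$, so the whole statement reduces to proving that $\mathfrak b(A_n+B) - \mathfrak b(A_n'+B) \to 0$ as $n \to \infty$.

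To bound this difference I would write $A_n+B = (A_n'+B) \cup ((n!\cdot\mathbb N+h_n)+B)$ and apply subadditivity \ref{it:f3} to get $\mathfrak b(A_n+B) - \mathfrak b(A_n'+B) \le \mathfrak b^\ast\bigl((n!\cdot\mathbb N+h_n)+B\bigr)$. The point is that $(n!\cdot\mathbb N+h_n)+B = \bigcup_{b\in B}(n!\cdot\mathbb N+(h_n+b))$ is contained in $n!\cdot\mathbb N+R_n$, where $R_n:=\{(h_n+b)\bmod n! : b\in B\}\subseteq\llb 0,n!-1\rrb$, so by Proposition~\ref{prop:basic}\ref{it:prop:basic(4)} and monotonicity
\[
\mathfrak b^\ast\bigl((n!\cdot\mathbb N+h_n)+B\bigr) \le \mathfrak b(n!\cdot\mathbb N+R_n) = \frac{|R_n|}{n!} = \frac{1}{n!}\bigl|\{b\bmod n! : b\in B\}\bigr|,
\]
the last equality because $x\mapsto x+h_n$ is a bijection of $\mathbb Z/n!\mathbb Z$. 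Since $\mathfrak b(B)=0$, Proposition~\ref{prop:basic}\ref{it:prop:basic(5)} says that $B$ meets $o(n!)$ residue classes modulo $n!$, so this quantity tends to $0$. Hence $\mathfrak b(A_n+B)-\mathfrak b(A_n'+B)\to 0$, and combining this with the displayed sandwich and Claim~\ref{claimA}\ref{claimA(i)} forces $\mathfrak b_\ast(A+B)=\mathfrak b^\ast(A+B)=\alpha$, i.e.\ $A+B\in\mathrm{dom}(\mathfrak b)$ and $\mathfrak b(A+B)=\alpha$.

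I expect the only genuine obstacle to be the estimate $\mathfrak b^\ast\bigl((n!\cdot\mathbb N+h_n)+B\bigr)\to 0$, which is also the one place where the smallness of $B$ enters the argument for this claim: the trick is to observe that translating $B$ and pushing it through an arithmetic progression of difference $n!$ cannot increase the number of residue classes modulo $n!$ that it occupies, so the residue-counting criterion of Proposition~\ref{prop:basic}\ref{it:prop:basic(5)} applies directly. By contrast, a naive approach that first covers $B$ by a low-density set $S\in\mathscr{A}$ and then adds $n!\cdot\mathbb N+h_n$ to $S$ would be useless, since $\gcd$ effects can inflate the density of the result all the way to $1$. Everything else is routine bookkeeping with Proposition~\ref{prop:basic} and the axioms \ref{it:f1}--\ref{it:f4}.
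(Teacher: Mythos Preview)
Your proof is correct and follows essentially the same approach as the paper's own argument: both sandwich $A+B$ between $A_n'+B$ and $A_n+B$, invoke Claim~\ref{claimA}\ref{claimA(i)} for the bounds $\mathfrak b(A_n'+B)\le\alpha<\mathfrak b(A_n+B)$, and then use subadditivity together with the residue-class criterion for $\mathfrak b(B)=0$ to show that $\mathfrak b^\ast((n!\cdot\mathbb N+h_n)+B)\to 0$. The only cosmetic difference is that you phrase the endgame as ``the gap tends to $0$'' whereas the paper runs an equivalent $\varepsilon$-argument; your residue-class computation is in fact slightly more explicit than the paper's.
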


\begin{proof}
Fix $n \in \mathbb N^+$. We gather from Claim \ref{claimA}\ref{claimA(ii)} that 
$$
A_n\setminus (n!\cdot \mathbb{N}+h_n) +B\subseteq A+B \subseteq A_n+B.
$$
Considering that, by Proposition \ref{prop:basic}\ref{it:prop:basic(6)}, $X \in \mathscr{A}$ yields $X+B \in \mathscr{A}$ and $\mathscr{A}\subseteq \mathrm{dom}(\mathfrak{b})$, it follows that
\begin{equation}\label{main:ineq(1)}
\mathfrak{b}(A_n\setminus (n!\cdot \mathbb{N}+h_n) +B)
\le \mathfrak{b}_\ast(A+B)
\le \mathfrak{b}^\ast(A+B)
\le \mathfrak{b}(A_n +B).
\end{equation}
Now, fix $\varepsilon > 0$. 
Since $\mathfrak{b}(B)=0$ there exists $n_\varepsilon \in \mathbb N^+$ such that $B$ covers at most $\varepsilon \cdot n!$ residue classes modulo $n!$ for all $n\ge n_\varepsilon$. 
Consequently, we obtain from the subadditivity of $\mathfrak b^\ast$ and Claim \ref{claimA}\ref{claimA(i)} that 
\begin{equation}\label{main:ineq(2)}
\mathfrak{b}(A_{n_\varepsilon} + B)\le \mathfrak{b}(A_{n_\varepsilon} \setminus (n_\varepsilon! \cdot \mathbb{N} + h_{n_\varepsilon}) +B)+\mathfrak{b}(n_\varepsilon! \cdot \mathbb{N} + h_{n_\varepsilon} + B) \le \alpha + \varepsilon.
\end{equation}
On the other hand, the first inequality in the last display and Claim \ref{claimA}\ref{claimA(i)} imply
\begin{equation}\label{main:ineq(3)}
\mathfrak{b}(A_{n_\varepsilon} \setminus ({n_\varepsilon}! \cdot \mathbb{N} + h_{n_\varepsilon}) + B)
\ge \mathfrak{b}(A_{n_\varepsilon} + B) - \mathfrak{b}({n_\varepsilon}! \cdot \mathbb{N} + h_{n_\varepsilon} + B) > \alpha - \varepsilon.
\end{equation}
Therefore, we get from Eqs.~\eqref{main:ineq(1)}--\eqref{main:ineq(3)} that $\alpha - \varepsilon < \mathfrak b_\ast(A + B) \le \mathfrak b^\ast(A + B) \le \alpha + \varepsilon$ for every $\varepsilon > 0$, which suffices to conclude that $A+B \in \mathrm{dom}(\mathfrak b)$ and $\mathfrak b(A + B) = \alpha$.
\end{proof}
By Proposition \ref{prop:basic}\ref{it:prop:basic(3)} and
Claims \ref{claimB} and \ref{claimC}, this is enough to finish the proof of the theorem.
\end{proof}

\begin{proof}[Proof of Theorem \ref{thm:primes}]
Straightforward from Theorem \ref{thm:mainsumsets2} an Remark \ref{rem:asymptotic-density}, when considering that the Buck density of the set of primes is zero (as already noted in Sect.~\ref{sec:densities}).
\end{proof}

As a final remark, we point out that, mutatis mutandis, all the results of this paper carry over 
to arithmetic [upper] quasi-densities on $\mathbb Z$, in the same spirit of \cite{MR3597402, MR4054777, MR4360486, MR4439850}.

\bibliographystyle{amsplain}

\end{document}